\newtheorem{theorem}{Theorem}[section]
\newtheorem{lemma}[theorem]{Lemma}
\theoremstyle{definition}
\newtheorem{ex}[theorem]{Example}
\newtheorem{defin}[theorem]{Definition}
\newtheorem{remark}[theorem]{Remark}
\DeclareTextFontCommand{\emph}{\color{blue}\em}
\title[Characteristic polynomials of deformations of Coxeter arrangements]{Characteristic Polynomials of Deformations of Coxeter Arrangements via levels of regions}
\author{Ningxin Zhang}
\address{School of Mathematical Sciences, Peking University}
\email{\href{mailto:xinxin9909@pku.edu.cn}{{\tt xinxin9909@pku.edu.cn}}}
\date{\today}
\begin{document}
\begin{abstract}
    We obtain a novel formula for characteristic polynomials of deformations of the Braid arrangement using the notion of levels of regions. As an application, we recover and strengthen results of Chen et al. \cite{wsj,wsj2} on the characteristic polynomial of several specific types of hyperplane arrangements via much simpler arguments. Our theorem also generalizes to type $B$.
\end{abstract}
\maketitle

\section{Introduction}
The study of Coxeter arrangements plays an essential role in the theory of hyperplane arrangements \cite{terao,stanley2004introduction}, largely because of their significant connections to algebra, particularly in the context of reflection groups and invariant theory. Coxeter arrangements arise from the reflecting hyperplanes associated with the root systems of finite Coxeter groups, which reveal the symmetries and combinatorial structures in a geometric context.

Deformations of Coxeter arrangements are affine arrangements where each hyperplane is parallel to some hyperplane of the original Coxeter arrangement. Numerous special examples of these arrangements have been extensively studied over the years \cite{athanasiadis00,POSTNIKOV2000544}, including the Catalan arrangement and the Shi arrangement, particularly concerning characteristic polynomials and the enumeration of regions. 

In this paper, we establish a formula for the characteristic polynomial of general deformations of Coxeter arrangements, expanding the polynomial into terms related to the numbers of regions with different levels. The level of a region, defined as the dimension of unbounded directions or the degree of freedom within the region, was first introduced by Ehrenborg in 2019 \cite{EHRENBORG201955}. We will present the precise definition in \Cref{sec:back}.

The \emph{Coxeter arrangement} of type $A_{n-1}$ in $\mathbb{R}^n$ is 
\[\operatorname{Cox}_{\mathcal{A}}(n) = \{x_i-x_j = 0\ |\ 1 \le i \ne j \le n\}.\]
We define the \emph{non-degenerate deformation} of the Coxeter arrangement of type $A_{n-1}$
\begin{equation}\label{typeA}
\mathcal{A} = \{x_i-x_j = a_{ij}^{(1)},\ldots,a_{ij}^{(t_{ij})}\ |\ 1\le i \ne j \le n\},
\end{equation}
where $a_{ij}^{(1)},\ldots,a_{ij}^{(t_{ij})} \in \mathbb{R}$ and $t_{ij} \ge 1$ for all $1 \le i,j \le n$. Notice that in the original definition of deformations, the number of hyperplanes in each direction $t_{ij}$ can be any nonnegative integer. However, in this article we focus on the \textbf{non-degenerate} deformations, meaning that in each direction there is at least one hyperplane within the arrangement. 

The following is our main theorem, a new expansion of the characteristic polynomial of deformations of the type $A$ Coxeter arrangements.
\begin{theorem}\label{thm:typeA}
    Let $\mathcal{A}$ be a non-degenerate deformation of $\operatorname{Cox}_{A}(n)$ as in \Cref{typeA}. Then
    \[\chi_{\mathcal{A}}(t) = \sum_{k = 0}^{n} (-1)^{n-k} \cdot r_k(\mathcal{A}) \cdot \binom{t}{k},\]
    where $r_k(\mathcal{A})$ is the number of regions with level $k$ in arrangement $\mathcal{A}$.
\end{theorem}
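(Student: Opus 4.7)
My plan is a double induction, inner on the total number of hyperplanes $N(\mathcal{A})=\sum_{i<j} t_{ij}$ and outer on $n$, driven by the deletion-restriction relation $\chi_{\mathcal{A}}(t)=\chi_{\mathcal{A}\setminus\{H\}}(t)-\chi_{\mathcal{A}^{H}}(t)$. When $N(\mathcal{A})>\binom{n}{2}$ I choose $H=\{x_{i}-x_{j}=a\}$ in a direction with $t_{ij}(\mathcal{A})\ge 2$, so that $\mathcal{A}\setminus\{H\}$ remains a non-degenerate deformation in $\mathbb{R}^{n}$ and the inner inductive hypothesis applies. The restriction $\mathcal{A}^{H}$, after identifying $x_{i}=x_{j}+a$, inherits at least one hyperplane in every direction among $\binom{[n]\setminus\{i\}}{2}$ and thus is a non-degenerate deformation of $\operatorname{Cox}_{A}(n-1)$, so by the outer induction the formula holds for $\chi_{\mathcal{A}^{H}}(t)$. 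Substituting both hypotheses into deletion-restriction, the desired identity reduces to the combinatorial claim
\[
r_{k}(\mathcal{A})=r_{k}(\mathcal{A}\setminus\{H\})+r_{k}(\mathcal{A}^{H}) \qquad \text{for every }k.
\]

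\textbf{Key geometric step.} The cut regions of $\mathcal{A}\setminus\{H\}$ are in a natural bijection with regions of $\mathcal{A}^{H}$ via $R\mapsto R\cap H$, and under this bijection the level of $R\cap H$ computed in $H\cong\mathbb{R}^{n-1}$ equals $\dim(\operatorname{rec}(R)\cap H_{0})$, where $H_{0}=\{v:v_{i}=v_{j}\}$ is the linear hyperplane parallel to $H$. For a cut region $R$ of level $\ell$, the pieces $R^{\pm}$ have recession cones $\operatorname{rec}(R)\cap H_{0}^{\pm}$. The decisive feature of the type-$A$ setting is that non-degeneracy forces $R$ to lie in some slot for direction $(i,j)$, yielding either the equality $v_{i}=v_{j}$ (middle slot) or a strict inequality $v_{i}\ne v_{j}$ (extreme slot) in the relative interior of $\operatorname{rec}(R)$. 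Hence either (A) $\operatorname{rec}(R)\subseteq H_{0}$ and both $R^{\pm}$ retain level $\ell=\dim(\operatorname{rec}(R)\cap H_{0})$, or (B) $\operatorname{rec}(R)$ lies strictly on one side $H_{0}^{\epsilon}$, in which case $R^{\epsilon}$ keeps level $\ell$ while $R^{-\epsilon}$ drops to the strictly smaller level $\dim(\operatorname{rec}(R)\cap H_{0})$; the "transversal interior cut" is excluded precisely by non-degeneracy. In both cases, the unique new region contributed by the cut has level $\dim(\operatorname{rec}(R)\cap H_{0})$, matching the level of $R\cap H$ in $\mathcal{A}^{H}$. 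Summing over all cut regions proves the identity.

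\textbf{Base cases and main obstacle.} The outer base $n=1$ is trivial. For the inner base $N(\mathcal{A})=\binom{n}{2}$ (exactly one hyperplane per direction), deletion would break non-degeneracy, so a separate argument is needed. When $\mathcal{A}=\operatorname{Cox}_{A}(n)$, the $n!$ Weyl chambers are their own recession cones, so $r_{n}=n!$ and the formula reduces to $\chi(t)=n!\binom{t}{n}=t(t-1)\cdots(t-n+1)$; for a general minimal non-degenerate deformation, I would either continuously deform the offsets $a_{ij}$ from the Coxeter values and track both sides across combinatorial wall-crossings, or add a "phantom" parallel hyperplane to raise $N$ by one and apply the inductive step reversibly. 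The inductive step itself is clean because non-degeneracy rules out the transversal interior cut—this is the geometric content that makes the level formula distinctively type-$A$ (and, by an analogous root-system analysis, type-$B$)—but the inner base case is the delicate point and I expect it to be the main obstacle.
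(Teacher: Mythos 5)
Your deletion--restriction framework, the observation that the restriction $\mathcal{A}^{H}$ is again a non-degenerate deformation in one lower dimension, and the recession-cone analysis behind $r_{k}(\mathcal{A})=r_{k}(\mathcal{A}\setminus\{H\})+r_{k}(\mathcal{A}^{H})$ all match the paper's core argument (its Lemma on restrictions and its level-counting step). In fact your version of the geometric step --- $\operatorname{rec}(R^{\pm})=\operatorname{rec}(R)\cap H_{0}^{\pm}$, with the transversal interior cut excluded because non-degeneracy traps $\operatorname{rec}(R)$ in a single closed cone of the braid fan --- is stated more carefully than the paper's informal ``the new regions lie between $H_{0}$ and the nearest parallel hyperplane'' argument, and it is correct.

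The genuine gap is exactly where you flag it: the inner base case $N(\mathcal{A})=\binom{n}{2}$. A minimal non-degenerate deformation has one hyperplane per direction with \emph{arbitrary} offsets, not the Coxeter values, and neither of your proposed remedies is a proof. Continuously deforming the offsets changes the intersection poset at wall-crossings, and tracking $\chi$ and every $r_{k}$ through those degenerations is a substantial argument you have not supplied. The phantom-hyperplane idea, as stated, does not close your induction: the augmented arrangement $\mathcal{A}\cup\{H'\}$ has $N+1>\binom{n}{2}$ hyperplanes, so the identity $\chi_{\mathcal{A}}=\chi_{\mathcal{A}\cup\{H'\}}+\chi_{(\mathcal{A}\cup\{H'\})^{H'}}$ expresses the unknown quantity in terms of an arrangement not covered by the inner inductive hypothesis. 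The paper avoids the problem by abandoning the induction on the number of hyperplanes altogether: it verifies the formula directly for $\operatorname{Cox}_{A}(n)$ (all $n!$ regions have level $n$, and $\chi=n!\binom{t}{n}$ by the finite-field method), proves that adding \emph{or removing} a single parallel hyperplane preserves the expansion --- the removal direction reuses the same triple with the roles of deletion and full arrangement swapped, giving $\chi_{\mathcal{A}\setminus\{H\}}=\chi_{\mathcal{A}}+\chi_{\mathcal{A}^{H}}$ and $r_{k}(\mathcal{A}\setminus\{H\})=r_{k}(\mathcal{A})-r_{k}(\mathcal{A}^{H})$ --- and then reaches any non-degenerate deformation from $\operatorname{Cox}_{A}(n)$ by first adding all of its hyperplanes and afterwards deleting the unwanted central ones, staying non-degenerate throughout. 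Your one-step preservation argument is the entire content of that walk, so the repair is a reorganization of the induction rather than new mathematics; but as written your proof is missing its base case.
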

\begin{remark}
    When $t = -1$, the right hand side of the formula
    \[\text{RHS} = \sum_{k = 0}^n (-1)^{n-k} \cdot r_k(\mathcal{A}) \cdot (-1)^{k} = (-1)^n \cdot r(\mathcal{A}),\]
    which is consistent with Zaslavsky's theorem (see \Cref{thm:zas}).
\end{remark}
Our result \Cref{thm:typeA} generalizes several recent results by Chen et al. (see Theorem 1.5 of \cite{wsj} and Theorem 1.2 of \cite{wsj2}) on the characteristic polynomials of a specific type of arrangements via much simpler and more general arguments.

\begin{ex}
    Let $\mathcal{A} = \{H_1,H_2,H_3,H_4,H_5\}$ be a hyperplane arrangement in $\mathbb{R}^3$, where \[H_1: x_1-x_2 = 0, \ H_2: x_1-x_2 = 1, \ H_3: x_2-x_3 = 0, \ H_4: x_1-x_3 = 1, \ H_5: x_1-x_3 = 0.\]
    \Cref{subfig:A} shows the projection of the arrangement $\mathcal{A}$ onto the plane $x _1 + x_2 + x_3 = 0$, where all the regions are labeled by their levels. The characteristic polynomial
    \[\chi_{\mathcal{A}}(t) = t^3 -5t^2 + 6t = 6\binom{t}{3} - 4\binom{t}{2} + 2 \binom{t}{1},\]
    where $r_3(\mathcal{A}) = 6$, $r_2(\mathcal{A}) = 4$, and $r_1(\mathcal{A}) = 2$.
\end{ex}

\Cref{thm:typeA} can be extended to type $B$ deformations as well. The Coxeter arrangement of type $B_n$ in $\mathbb{R}^n$ is
\[\operatorname{Cox}_{\mathcal{B}}(n) = \{x_i = 0\ |\ 1 \le i \le n\} \cup \{x_i\pm x_j = 0\ |\ 1 \le i,j \le n\}.\]
Similarly, we define the \emph{non-degenerate deformation} of the Coxeter arrangement of type $B_n$
\begin{equation}\label{typeB}
\begin{aligned}
\mathcal{B} = & \{x_i = a_i^{(1)},\ldots,a_i^{(r_i)} \ |\ 1 \le i \le n\} \\
& \cup \{x_i-x_j = b_{ij}^{(1)},\ldots,b_{ij}^{(s_{ij})}\ |\ 1\le i \ne j \le n\} \\
& \cup \{x_i+x_j = c_{ij}^{(1)},\ldots,c_{ij}^{(t_{ij})}\ |\ 1\le i \ne j \le n\},
\end{aligned}
\end{equation}
where $a_i^{(1)},\ldots,a_i^{(r_i)},b_{ij}^{(1)},\ldots,b_{ij}^{(s_{ij})},c_{ij}^{(1)},\ldots,c_{ij}^{(t_{ij})} \in \mathbb{R}$ and $r_i,s_{ij,}t_{ij} \ge 1$ for all $1 \le i,j \le n$. As in the type $A$ case, the non-degenerate deformation of type $B$ still requires that there is at least one hyperplane in each direction of the arrangement. 

Analogous to \Cref{thm:typeA}, we have the following main theorem, an expansion of the characteristic polynomial of deformations of the type $B$ Coxeter arrangements.  

\begin{theorem}\label{thm:typeB}
    Let $\mathcal{B}$ be a non-degenerate deformation of $\operatorname{Cox}_{B}(n)$ as in \Cref{typeB}. Then,  
    \[\chi_{\mathcal{B}}(t) = \sum_{k = 0}^{n} (-1)^{n-k}\cdot r_k(\mathcal{B}) \cdot \binom{\frac{t-1}{2}}{k},\]
    where $r_k(\mathcal{B})$ is the number of regions with level $k$ in arrangement $\mathcal{B}$.
\end{theorem}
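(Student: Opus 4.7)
The plan is to use Athanasiadis' finite-field method at odd primes, matching the substitution $t = 2m+1$ that converts $\binom{(t-1)/2}{k}$ into $\binom{m}{k}$. Since both sides of the proposed formula are polynomials in $t$ of degree at most $n$, it suffices to verify the identity on infinitely many values $t = 2m+1$, which is precisely where the arithmetic of the symmetric interval in $\mathbb{F}_q$ makes contact with the geometry of $\mathcal{B}$ in $\mathbb{R}^n$.

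For a sufficiently large odd prime $q = 2m+1$ (large enough that reduction modulo $q$ preserves the intersection lattice of $\mathcal{B}$), the finite-field method gives
\[\chi_{\mathcal{B}}(q) = \#\bigl\{x \in \mathbb{F}_q^n : x \text{ avoids every hyperplane of } \mathcal{B}\bigr\}.\]
Identifying $\mathbb{F}_q$ with the symmetric interval $\{-m, \ldots, -1, 0, 1, \ldots, m\}$, these points correspond bijectively to integer points of the symmetric box $[-m,m]^n \subset \mathbb{R}^n$ that avoid every hyperplane of $\mathcal{B}$ viewed in $\mathbb{R}^n$. Partitioning by the unique region of $\mathcal{B}$ containing each such point yields
\[\chi_{\mathcal{B}}(q) = \sum_{R \in \mathrm{Reg}(\mathcal{B})} \#\bigl(R \cap [-m,m]^n \cap \mathbb{Z}^n\bigr).\]
The central task is to rewrite this sum as $\sum_{k=0}^n (-1)^{n-k} r_k(\mathcal{B}) \binom{m}{k}$. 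Mimicking the type $A$ argument from \Cref{thm:typeA}, one would attach to each lattice point in a region $R$ a canonical subset of $\{1,\ldots,m\}$ whose cardinality equals $\mathrm{level}(R)$, constructed from the unbounded directions of $R$. The alternating signs then emerge via a Möbius inversion (or sign-reversing involution) on the poset of faces at infinity of $\mathcal{B}$, collapsing the raw region-by-region counts into the clean level-indexed sum. Polynomial interpolation then extends the identity to all $t$.

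The main obstacle is engineering this canonical bijection in the type $B$ setting. The type $A$ Coxeter arrangement has a one-dimensional lineality in the direction $(1,\ldots,1)$, which permits a translation-based normalization of coordinates and allows the argument for \Cref{thm:typeA} to work with a box anchored at the origin of the essentialized arrangement. Type $B$ destroys this lineality through the hyperplanes $x_i = a_i^{(r)}$ and introduces a reflection symmetry through the family $x_i+x_j = c_{ij}^{(r)}$; this is precisely why the natural box must be symmetric about the origin and why $q$ must be odd, producing $m = (q-1)/2$. A careful case analysis separating the contributions of the three hyperplane families, and verifying that Ehrenborg's notion of level correctly tracks both shifts and sign flips, should yield the required bijection; an alternative route is to lift $\mathcal{B}$ to a type $A_{2n}$ arrangement on $\mathbb{R}^{2n+1}$ via $x_{-i} = -x_i$ and $x_0 = 0$, apply \Cref{thm:typeA} there, and transfer the identity by a folding, at the cost of checking how levels behave under the unfolding.
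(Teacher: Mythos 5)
Your proposal does not follow the paper's route and, more importantly, it leaves the entire content of the theorem unproved. The paper argues by induction on $n$ using deletion--restriction: it verifies the formula for $\operatorname{Cox}_{B}(n)$ itself via the finite field method (this is the only place finite fields enter), proves that the restriction of a non-degenerate type $B$ deformation to any of its hyperplanes is again a non-degenerate type $B$ deformation in $\mathbb{R}^{n-1}$ (\Cref{lemma:nondeB}), and then shows that adding a parallel hyperplane $H_0$ changes the level counts by exactly $r_k(\tilde{\mathcal{B}}) = r_k(\mathcal{B}) + r_k(\mathcal{B}^{H_0})$ for $k \le n-1$ while leaving $r_n$ unchanged; together with $\chi_{\tilde{\mathcal{B}}} = \chi_{\mathcal{B}} - \chi_{\mathcal{B}^{H_0}}$ and the inductive hypothesis in dimension $n-1$, this propagates the expansion to every non-degenerate deformation. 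In your plan, the step that would carry all of this weight --- rewriting $\sum_R \#\bigl(R \cap [-m,m]^n \cap \mathbb{Z}^n\bigr)$ as $\sum_k (-1)^{n-k} r_k(\mathcal{B})\binom{m}{k}$ via a ``canonical bijection'' and a ``sign-reversing involution on the poset of faces at infinity'' --- is only asserted to exist (``should yield the required bijection''). That identity \emph{is} the theorem; without a construction of the bijection and the involution, nothing has been established.

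There is also a concrete error earlier in the plan. After identifying $\mathbb{F}_q$ with $\{-m,\ldots,m\}$, a point of the box that avoids every real hyperplane of $\mathcal{B}$ need not avoid the hyperplanes modulo $q$: if $x_i = m$ and $x_j = -m$ then $x_i - x_j = q-1 \equiv -1 \pmod q$, so such a point is excluded from the finite-field count whenever $x_i - x_j = -1$ is a hyperplane of $\mathcal{B}$, yet it contributes to your real lattice-point sum. Hence the displayed equality $\chi_{\mathcal{B}}(q) = \sum_R \#\bigl(R\cap[-m,m]^n\cap\mathbb{Z}^n\bigr)$ fails in general (only $\le$ holds), and these wrap-around terms are precisely the boundary contributions your subsequent bookkeeping would have to absorb. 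The folding alternative you mention has an analogous unaddressed difficulty: \Cref{thm:typeA} applies to an arrangement in $\mathbb{R}^{2n+1}$, not to its restriction to the subspace $x_{-i}=-x_i$, $x_0=0$, and neither the characteristic polynomial nor the level statistics transfer across that restriction without an argument. I recommend abandoning the finite-field route here and instead establishing the two inductive ingredients the paper uses: closure of non-degenerate type $B$ deformations under restriction, and the additivity of level counts under insertion of a parallel hyperplane.
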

\begin{remark}
    When $t = -1$, the right hand side of the formula
    \[\text{RHS} = \sum_{k = 0}^n (-1)^{n-k} \cdot r_k(\mathcal{B}) \cdot (-1)^k = (-1)^n \cdot r(\mathcal{B}),\]
    which is consistent with Zaslavsky's theorem.
\end{remark}

\begin{ex}
    Let $\mathcal{B} = \{H_1,H_2,H_3,H_4,H_5\}$ be a hyperplane arrangement in $\mathbb{R}^3$ shown in \Cref{subfig:B}, where \[H_1: x_1 = 0, \ H_2: x_1-x_2 = 0, \ H_3: x_2 = 0, \ H_4: x_1+x_2 = 1.\]
    The characteristic polynomial
    \[\chi_{\mathcal{B}}(t) = t^2 -4t + 5 = 8\binom{\frac{t-1}{2}}{2} + 2 \binom{\frac{t-1}{2}}{0},\]
    where $r_2(\mathcal{B}) = 8$, $r_1(\mathcal{B}) = 0$, and $r_0(\mathcal{B}) = 2$.
\end{ex}
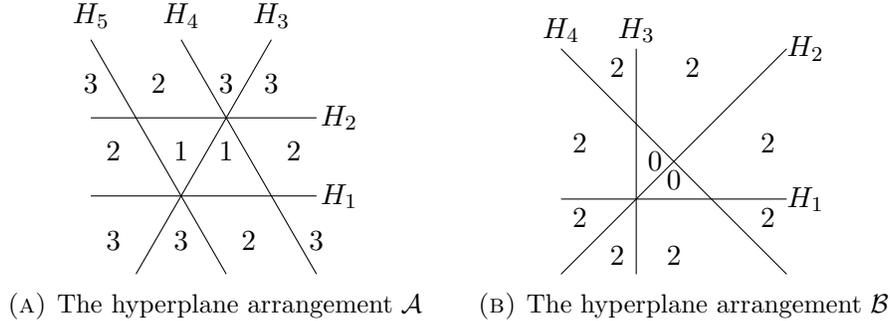
\begin{figure}[h]
    \centering
    \begin{subfigure}{.4\textwidth}
    \centering
    \begin{tikzpicture}[scale = 0.6]
        \draw (-1,0)--(4,0);
        \draw (-1,{sqrt(3)})--(4,{sqrt(3)});
        \draw (0,{-sqrt(3)})--(3,{2*sqrt(3)});
        \draw (1,{2*sqrt(3)})--(4,{-sqrt(3)});
        \draw (-1,{2*sqrt(3)})--(2,{-sqrt(3)});
        \node at (3,4) {$H_3$};
        \node at (1,4) {$H_4$};
        \node at (-1,4) {$H_5$};
        \node at (4.5,{sqrt(3)}) {$H_2$};
        \node at (4.5,0) {$H_1$};
        \node at (1,1) {$1$};\node at (2,1) {$1$};
        \node at (-0.5,1) {$2$};\node at (3.5,1) {$2$};\node at (0.5,2.5) {$2$};\node at (2.5,-1) {$2$};
        \node at (-1,2.5) {$3$};\node at (2,2.5) {$3$};\node at (3,2.5) {$3$};\node at (-0.5,-1) {$3$};\node at (1,-1) {$3$};\node at (4,-1) {$3$};
    \end{tikzpicture}
    \caption{The hyperplane arrangement $\mathcal{A}$}
    \label{subfig:A}
    \end{subfigure}
    \begin{subfigure}{.4\textwidth}
    \centering
    \begin{tikzpicture}[scale = 0.5]
        \draw (-2,2)--(4,2);
        \draw (0,0)--(0,6);
        \draw (-2,0)--(4,6);
        \draw (-2,6)--(4,0);
        \node at (0,6.5) {$H_3$};
        \node at (-2,6.5) {$H_4$};
        \node at (4.5,6) {$H_2$};
        \node at (4.5,2) {$H_1$};
        \node at (0.5,3) {$0$};\node at (1,2.5) {$0$};
        \node at (-1.5,3.5) {$2$};\node at (-0.5,5.5) {$2$};\node at (3.5,3.5) {$2$};\node at (1.5,5.5) {$2$};\node at (3.5,1.5) {$2$};\node at (1,0.5) {$2$};\node at (-0.5,0.5) {$2$};\node at (-1.5,1.5) {$2$};
    \end{tikzpicture}
    \caption{The hyperplane arrangement $\mathcal{B}$}
    \label{subfig:B}
    \end{subfigure}
    \caption{Examples of deformations of Coxeter arrangements.}
\end{figure}
The outline of the paper is as follows. In \Cref{sec:back} we give necessary background on hyperplane arrangements. In \Cref{sec:proof} we prove our main results, which are \Cref{thm:typeA} and \Cref{thm:typeB}, and give some applications as well.

\section{Background}\label{sec:back}
\subsection{Characteristic polynomials}
A \emph{hyperplane arrangement} $\mathcal{A} = \{H_1,\ldots,H_m\}$ is a finite set of affine hyperplanes in $\mathbb{R}^n$. The \emph{intersection poset} $L(\mathcal{A})$ of arrangement $\mathcal{A}$ is the set of all nonempty intersections of hyperplanes in $\mathcal{A}$, including $\mathbb{R}^n$ itself, partially ordered by reverse inclusion.

\begin{defin}
    The \emph{characteristic polynomial} $\chi_{\mathcal{A}}(t)$ of the arrangement $\mathcal{A}$ is defined by 
    \[\chi_{\mathcal{A}}(t) = \sum_{x \in L(\mathcal{A})} \mu(x) t^{\dim(x)},\]
    where $L(\mathcal{A})$ is the intersecting poset of $\mathcal{A}$, and $\mu(x) = \mu(\hat{0},x)$ is the M\"{o}bius function of $L(\mathcal{A})$. 
\end{defin}

Let $\mathcal{A}$ be an arrangement in $\mathbb{R}^n$. Given a hyperplane $H_0 \in \mathcal{A}$, define the \emph{restriction arrangement} $\mathcal{A}^{H_0}$ in the affine subspace $H_0 \cong \mathbb{R}^{n-1}$ by
\[\mathcal{A}^{H_0} = \{H_0 \cap H \neq \emptyset: \ H \in \mathcal{A} {-} \{H_0\}\}.\]
Let $\mathcal{A}^{\prime} = \mathcal{A} {-} \{H_0\}$ and $\mathcal{A}^{\prime\prime} = \mathcal{A}^{H_0}$. We call $(\mathcal{A},\mathcal{A}^{\prime},\mathcal{A}^{\prime\prime})$ a \emph{triple of arrangements with distinguished hyperplane} $H_0$.
The characteristic polynomial has a fundamental recursive property. 

\begin{lemma}[Deletion-restriction \cite{stanley2004introduction}]\label{lemma:d-r}
    Let $(\mathcal{A},\mathcal{A}^{\prime},\mathcal{A}^{\prime\prime})$ be a triple of arrangements. Then
    \[\chi_{\mathcal{A}}(t) = \chi_{\mathcal{A}^{\prime}}(t) - \chi_{\mathcal{A}^{\prime\prime}}(t).\]
\end{lemma}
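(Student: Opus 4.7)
The plan is to reduce the deletion-restriction identity to a Whitney-type expansion of the characteristic polynomial. As a preliminary step, I would first establish (by a routine Möbius inversion on $L(\mathcal{A})$) that
\[\chi_{\mathcal{A}}(t) = \sum_{\substack{\mathcal{B} \subseteq \mathcal{A} \\ \bigcap \mathcal{B} \neq \emptyset}} (-1)^{|\mathcal{B}|}\, t^{\dim \bigcap \mathcal{B}},\]
with the convention $\bigcap \emptyset = \mathbb{R}^n$. The derivation uses that for each flat $X$, grouping subsets $\mathcal{B}$ by $\bigcap \mathcal{B} = X$ and invoking the standard identity $\sum_{Y \le X}\mu(\hat 0, Y) = \delta_{X,\hat 0}$ recovers the Möbius-theoretic definition of $\chi_{\mathcal{A}}$.

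With this formula in hand, I split the sum for $\chi_{\mathcal{A}}(t)$ according to whether $H_0 \in \mathcal{B}$. Subsets with $H_0 \notin \mathcal{B}$ are precisely the central subsets of $\mathcal{A}'$, so their total contribution is $\chi_{\mathcal{A}'}(t)$ by the same Whitney formula applied to $\mathcal{A}'$. For the remaining $\mathcal{B} = \{H_0\} \sqcup \mathcal{B}'$ with $\mathcal{B}' \subseteq \mathcal{A}'$, the centrality condition $H_0 \cap \bigcap \mathcal{B}' \neq \emptyset$ forces every $H \in \mathcal{B}'$ to meet $H_0$ (otherwise $H$ is a disjoint parallel of $H_0$), and in that case $H_0 \cap \bigcap \mathcal{B}' = \bigcap_{H \in \mathcal{B}'}(H \cap H_0)$.

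To identify the remaining contribution with $-\chi_{\mathcal{A}''}(t)$, I introduce the restriction map $\pi\colon H \mapsto H \cap H_0$ from $\{H \in \mathcal{A}' : H \cap H_0 \neq \emptyset\}$ onto $\mathcal{A}''$; by the very definition of $\mathcal{A}''$, every fibre $\pi^{-1}(K)$ is nonempty. Grouping the admissible $\mathcal{B}'$ by their image $\mathcal{C} := \pi(\mathcal{B}') \subseteq \mathcal{A}''$ and applying the elementary identity $\sum_{\emptyset \neq S \subseteq [m]}(-1)^{|S|} = -1$ (valid for $m \ge 1$) on each fibre, the fibrewise sums collapse to $(-1)^{|\mathcal{C}|+1}$, leaving $-\sum_{\mathcal{C} \subseteq \mathcal{A}'',\, \bigcap \mathcal{C} \neq \emptyset}(-1)^{|\mathcal{C}|}\, t^{\dim \bigcap \mathcal{C}} = -\chi_{\mathcal{A}''}(t)$. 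Combining both pieces then yields the claimed identity $\chi_{\mathcal{A}}(t) = \chi_{\mathcal{A}'}(t) - \chi_{\mathcal{A}''}(t)$. The main obstacle is the non-injectivity of $\pi$---several hyperplanes of $\mathcal{A}'$ may restrict to the same $K \in \mathcal{A}''$---but this is precisely what the fibrewise inclusion-exclusion cancellation handles cleanly, and is the one place in the argument where one must take care not to over- or under-count.
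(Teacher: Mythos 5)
This lemma is quoted in the paper directly from \cite{stanley2004introduction} with no proof given, so there is nothing internal to compare against; I can only assess your argument on its own terms, and it is correct. It is in fact the standard proof from the cited reference: expand $\chi_{\mathcal{A}}(t)$ by Whitney's theorem over central subsets, split according to whether $H_0$ lies in the subset, observe that centrality excludes any hyperplane disjoint from $H_0$, and collapse the fibres of the restriction map $H \mapsto H \cap H_0$ by the inclusion--exclusion identity $\sum_{\emptyset \neq S}(-1)^{|S|} = -1$ to recover $-\chi_{\mathcal{A}''}(t)$. You correctly handle the two points where such an argument typically goes wrong: the non-injectivity of the restriction map (several $H$ may cut out the same hyperplane of $H_0$) and the bookkeeping of the empty subset, whose contribution $-t^{\dim H_0} = -t^{n-1}$ matches the leading term of $-\chi_{\mathcal{A}''}(t)$ in the ambient space $H_0 \cong \mathbb{R}^{n-1}$.
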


We call $\mathcal{A}$ an \emph{integer-arrangement} if all of its hyperplanes are given by equations with integer coefficients. For such an arrangement $\mathcal{A}$, the following well-known result shows that the characteristic polynomial can be computed by counting the cardinality of certain finite fields.
\begin{theorem}[\cite{athanasiadis96}]\label{thm:terao}
    Let $\mathcal{A}$ be an rational arrangement in $\mathbb{R}^n$. Given a sufficiently large prime power $q$, then $\chi_{\mathcal{A}}(q)$ is equal to the number of points in $\mathbb{F}_q^n$ that do not belong to any of the hyperplanes in arrangement $\mathcal{A}$. 
\end{theorem}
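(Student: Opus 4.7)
\smallskip

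My plan is to prove the statement by Möbius inversion on the intersection poset $L(\mathcal{A})$, combined with a counting argument that controls the reduction of the rational flats modulo $p$. For a sufficiently large prime power $q$, I write $V_q := \mathbb{F}_q^n \smallsetminus \bigcup_{H\in\mathcal{A}} H$ for the complement we want to count, and my goal is to show $|V_q| = \chi_{\mathcal{A}}(q)$.

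For each flat $x \in L(\mathcal{A})$ I would introduce two counts over $\mathbb{F}_q$: let $g(x)$ be the number of points of $\mathbb{F}_q^n$ lying on $x$ (treating the defining equations of $x$ as equations over $\mathbb{F}_q$), and let $f(x)$ be the number of points lying on $x$ but on no flat strictly above $x$ in $L(\mathcal{A})$ (i.e.\ strictly contained in $x$ as an affine subspace). By partitioning points according to the finest flat containing them, I get $g(x) = \sum_{y \geq x} f(y)$, and Möbius inversion on $L(\mathcal{A})$ yields $f(x) = \sum_{y \geq x} \mu(x,y)\, g(y)$. Taking $x = \hat{0} = \mathbb{R}^n$ and noting that $f(\hat{0}) = |V_q|$ gives
\[
|V_q| \;=\; \sum_{y \in L(\mathcal{A})} \mu(\hat{0},y)\, g(y).
\]
Once I can assert $g(y) = q^{\dim(y)}$ for every $y \in L(\mathcal{A})$, the right-hand side is precisely $\chi_{\mathcal{A}}(q)$ and the theorem follows.

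The main obstacle, and the place where the hypothesis "$q$ sufficiently large" is used, is the identity $g(y) = q^{\dim(y)}$. Each $y \in L(\mathcal{A})$ is the solution set in $\mathbb{R}^n$ of an integer system $Ax = b$, where the rank $r$ of $A$ equals $n - \dim(y)$, witnessed by some nonzero $r\times r$ minor of $[A\mid b]$. Provided the characteristic of $\mathbb{F}_q$ does not divide any of these witnessing minors, the reduced system over $\mathbb{F}_q$ still has rank $r$ and is consistent, so its solution set in $\mathbb{F}_q^n$ is an affine subspace of dimension $n-r = \dim(y)$, containing $q^{\dim(y)}$ points. Since $L(\mathcal{A})$ is finite and only finitely many primes are excluded in this way, there is a uniform bound on the bad characteristics, and for any $q$ whose characteristic avoids them the count $g(y) = q^{\dim(y)}$ holds simultaneously for every $y$. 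The same rank control simultaneously guarantees that the containment structure of the flats is preserved under reduction, so the poset $L(\mathcal{A})$ used for the Möbius inversion matches the geometry over $\mathbb{F}_q$.

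Combining the two steps, for every sufficiently large prime power $q$,
\[
|V_q| \;=\; \sum_{y \in L(\mathcal{A})} \mu(\hat{0},y)\, q^{\dim(y)} \;=\; \chi_{\mathcal{A}}(q),
\]
which is the claimed identity. I expect the algebraic bookkeeping in the Möbius step to be routine; the only place requiring genuine care is making the "avoid finitely many bad primes" argument precise, ideally by giving an explicit (if not sharp) bound in terms of the largest minors of the coefficient matrices of the hyperplanes of $\mathcal{A}$.
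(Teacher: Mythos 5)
The paper does not prove this statement; it is quoted as background and attributed to Athanasiadis \cite{athanasiadis96}, so there is no in-paper argument to compare against. Your proposal is the standard proof of the finite field method and is correct in outline: M\"obius inversion over $L(\mathcal{A})$ applied to $g(x)=\sum_{y\ge x}f(y)$, plus the observation that for all but finitely many characteristics every flat reduces to an $\mathbb{F}_q$-affine subspace of the same dimension, so $g(y)=q^{\dim y}$. The one place you should be more careful is the sentence asserting that rank control of the individual flats ``simultaneously guarantees that the containment structure is preserved.'' What the inversion argument actually needs is that the whole intersection poset over $\mathbb{F}_q$ is isomorphic to $L(\mathcal{A})$ via reduction: for \emph{every} subset $S\subseteq\mathcal{A}$ the intersection $\bigcap_{H\in S}H$ must remain empty when it is empty over $\mathbb{Q}$ (ranks of both the coefficient and augmented matrices must be preserved, so that, e.g., parallel hyperplanes do not acquire common points mod $p$), must keep its dimension when nonempty, and distinct flats must stay distinct. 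All of this follows from the same finitely-many-bad-minors argument, but it is a condition on all subsets of hyperplanes, not just on the flats one at a time, and it is exactly what legitimizes both the identity $g(x)=\sum_{y\ge x}f(y)$ (every $\mathbb{F}_q$-point has a well-defined smallest containing flat coming from $L(\mathcal{A})$) and the use of the M\"obius function of the real poset. With that point spelled out, your argument is complete and matches the standard one in the literature.
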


\subsection{Levels of regions}
A \emph{region} of an arrangement $\mathcal{A}$ is a connected component of the complement of the hyperplanes. Let $\mathcal{R}(\mathcal{A})$ denote the set of regions of $\mathcal{A}$, and let
\[r(\mathcal{A}) = \left|\mathcal{R}(\mathcal{A})\right|\]
denote the number of regions in the arrangement $\mathcal{A}$.

\begin{theorem}[Zaslavsky Theorem]\label{thm:zas}
     Let $\mathcal{A}$ be an arrangement in $\mathbb{R}^n$. Then
     \[r(\mathcal{A}) = (-1)^n \chi_{\mathcal{A}}(-1).\]
\end{theorem}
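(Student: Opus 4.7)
The plan is to prove Zaslavsky's theorem by induction on $|\mathcal{A}|$, the number of hyperplanes in $\mathcal{A}$, matching the linear recursion for the characteristic polynomial provided by \Cref{lemma:d-r} against a parallel geometric recursion for the number of regions. For the base case $\mathcal{A} = \emptyset$ in $\mathbb{R}^n$, we have $L(\mathcal{A}) = \{\mathbb{R}^n\}$ with $\mu(\mathbb{R}^n) = 1$, so $\chi_{\mathcal{A}}(t) = t^n$ and $(-1)^n \chi_{\mathcal{A}}(-1) = (-1)^{2n} = 1 = r(\mathcal{A})$.

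For the inductive step I would pick any $H_0 \in \mathcal{A}$ and form the triple $(\mathcal{A},\mathcal{A}',\mathcal{A}'')$. Since $\mathcal{A}'$ has one fewer hyperplane in $\mathbb{R}^n$ while $\mathcal{A}''$ lives in $H_0 \cong \mathbb{R}^{n-1}$ and has strictly fewer hyperplanes still, the inductive hypothesis applies to both, yielding $r(\mathcal{A}') = (-1)^n \chi_{\mathcal{A}'}(-1)$ and $r(\mathcal{A}'') = (-1)^{n-1} \chi_{\mathcal{A}''}(-1)$. The crucial geometric identity I need is
\[r(\mathcal{A}) = r(\mathcal{A}') + r(\mathcal{A}'').\]
To establish this, I would examine how adding $H_0$ back to $\mathcal{A}'$ modifies its regions: each region $R$ of $\mathcal{A}'$ is an open convex polyhedron, so either $R$ avoids $H_0$ (and remains a single region of $\mathcal{A}$), or $H_0$ slices $R$ into exactly two regions of $\mathcal{A}$. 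In the latter case $R \cap H_0$ is a nonempty relatively open subset of $H_0$ that is itself a region of $\mathcal{A}''$, and the assignment $R \mapsto R \cap H_0$ is a bijection between the regions of $\mathcal{A}'$ split by $H_0$ and the regions of $\mathcal{A}''$.

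Combining the geometric identity with \Cref{lemma:d-r} evaluated at $t = -1$ gives
\[(-1)^n \chi_{\mathcal{A}}(-1) = (-1)^n \chi_{\mathcal{A}'}(-1) + (-1)^{n-1} \chi_{\mathcal{A}''}(-1) = r(\mathcal{A}') + r(\mathcal{A}'') = r(\mathcal{A}),\]
completing the induction. The main obstacle is verifying the bijection used in the geometric identity: one must show that if $R$ meets $H_0$ at all, then $R \cap H_0$ is full-dimensional in $H_0$, and conversely that every region of $\mathcal{A}''$ arises from exactly one such $R$. This depends on the convexity of the regions of $\mathcal{A}'$ together with the observation that intersecting the remaining hyperplanes of $\mathcal{A}'$ with $H_0$ yields precisely the defining hyperplanes of $\mathcal{A}''$; these are topological facts, outside the purely combinatorial framework of the M\"obius function, but they are classical and well documented in the standard references on hyperplane arrangements.
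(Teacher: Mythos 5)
Your proof is correct: it is the standard deletion--restriction induction for Zaslavsky's theorem, with the right base case, the correct region recursion $r(\mathcal{A}) = r(\mathcal{A}') + r(\mathcal{A}'')$, and a sound justification of the bijection between regions of $\mathcal{A}'$ cut by $H_0$ and regions of $\mathcal{A}''$. The paper itself states this result without proof, citing it as classical background; your argument matches the usual textbook proof (e.g.\ the one in Stanley's notes on hyperplane arrangements, which the paper references), so there is nothing to reconcile.
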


The following definition may not be as familiar to the audience.
\begin{defin}
    Given a subset $X \subset \mathbb{R}^n$, the \emph{level} of $X$ is the smallest non-negative integer $\ell$ such that
    \[ X \subset B(W,r) = \{x \in \mathbb{R}^n: d(x,W) \le r\},\]
    for some subspace $W$ of dimension $\ell$ and a real number $r > 0$.
\end{defin}
Informally speaking, the level of a region equals the dimension of its unbounded directions, reflecting the region's degree of freedom. 

Let $\mathcal{R}_{\ell}(\mathcal{A})$ denote the collection of regions of $\mathcal{A}$ with level $\ell$, and let
\[r_{\ell}(\mathcal{A}) = \left| \mathcal{R}_{\ell}(\mathcal{A})\right|.\]

\begin{ex}
Let $\mathcal{A} = \{H_1,H_2,H_3,H_4\}$ be a hyperplane arrangement in $\mathbb{R}^2$, see \Cref{fig:level}, where \[H_1: x = 0, \ H_2: y = 0, \ H_3: x+y = 1, \ H_4: y = 1.\]
The characteristic polynomial of $\mathcal{A}$ is $\chi_{\mathcal{A}}(t) = t^2-4t+4$. 
For the three regions labeled in \Cref{fig:level}, we show that $\ell(\Delta_0) = 0$, $\ell(\Delta_1) = 1$, $\ell(\Delta_2) = 2$. We have the number of regions with each level $r_0(\mathcal{A}) = 1$, $r_1(\mathcal{A}) = 2$, $r_2(\mathcal{A}) = 6$, and $r(\mathcal{A}) = r_0(\mathcal{A}) + r_1(\mathcal{A}) + r_2(\mathcal{A}) = 9$.
\end{ex}
\begin{figure}[h]
    \centering
    \begin{tikzpicture}[scale = 0.6]
        \draw (0,0)--(6,0);
        \draw (0,2)--(6,2);
        \draw (2,-2)--(2,4);
        \draw (0,4)--(6,-2);
        \node at (2.5,1) {$\Delta_0$};
        \node at (4,1) {$\Delta_1$};
        \node at (3,3) {$\Delta_2$};
        \node at (2,4.5) {$H_1$};
        \node at (0,4.5) {$H_3$};
        \node at (6.5,2) {$H_4$};
        \node at (6.5,0) {$H_2$};
    \end{tikzpicture}
    \caption{The hyperplane arrangement $\mathcal{A}$}
    \label{fig:level}
\end{figure}
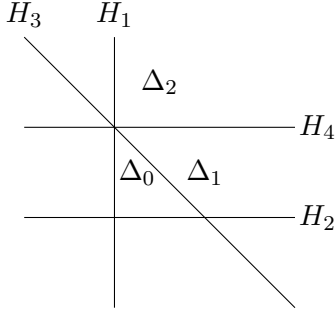

\section{Expanding $\chi(t)$ to the numbers of regions with different levels}\label{sec:proof}
In this section, we proof our main results \Cref{thm:typeA} and \Cref{thm:typeB}. The method involves applying the deletion-restriction lemma (see \Cref{lemma:d-r}) and using induction on the dimension of the arrangements. The crucial part is to show that the restriction of a deformation on a hyperplane is also an arrangement of desired form. This allows us to utilize the recursive property of characteristic polynomials. 

\subsection{Deformations of type $A$ Coxeter arrangements}
Before proving \Cref{thm:typeA}, we establish the following lemma to address the key component of the proof.
\begin{lemma}\label{lemma:nonde}
    Let $\mathcal{A}$ be a non-degenerate deformation of type $A$ Coxeter arrangement in $\mathbb{R}^n$. Choose a hyperplane $H_0 \in \mathcal{A}$. Then the restriction of $\mathcal{A}$ on $H_0$ is a non-degenerate deformation of type $A$ Coxeter arrangement in $\mathbb{R}^{n-1}$.
\end{lemma}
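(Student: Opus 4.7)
The plan is to compute the restriction $\mathcal{A}^{H_0}$ hyperplane by hyperplane and verify that the resulting arrangement has the form of \Cref{typeA} with one fewer coordinate. Without loss of generality I would write the chosen hyperplane as $H_0: x_p - x_q = c$ for some distinct $p, q \in [n]$ and some $c \in \mathbb{R}$, and identify $H_0$ with $\mathbb{R}^{n-1}$ via the coordinate projection that forgets $x_p$ (equivalently, using the substitution $x_p = x_q + c$ valid on $H_0$).

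Next I would do a case analysis on any other hyperplane $H: x_i - x_j = a$ in $\mathcal{A}$, based on how $\{i,j\}$ meets $\{p,q\}$. If $\{i,j\} \cap \{p,q\} = \emptyset$, then $H \cap H_0$ projects to $x_i - x_j = a$ in $\mathbb{R}^{n-1}$. If exactly one of $i, j$ equals $q$ and the other does not equal $p$, the restriction is simply $x_i - x_j = a$ again. If exactly one of $i, j$ equals $p$ (and the other is not $q$), substituting $x_p = x_q + c$ yields a hyperplane $x_q - x_j = a - c$ or $x_i - x_q = a + c$ in $\mathbb{R}^{n-1}$, so the direction $\{p, \cdot\}$ folds onto the direction $\{q, \cdot\}$. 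Finally, if $\{i,j\} = \{p,q\}$ then either $H = H_0$ (excluded from $\mathcal{A}^{H_0}$) or $H \cap H_0 = \emptyset$ (contributing nothing). Collecting these cases shows that every hyperplane of $\mathcal{A}^{H_0}$ has the form $x_i - x_j = \text{const}$ with $i, j \in [n]\setminus\{p\}$, so $\mathcal{A}^{H_0}$ is a deformation of $\operatorname{Cox}_A(n-1)$ on the ambient $\mathbb{R}^{n-1}$.

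It remains to verify non-degeneracy, namely that each direction $\{i, j\}$ with $i, j \in [n] \setminus \{p\}$ contains at least one hyperplane of $\mathcal{A}^{H_0}$. For a direction not involving $q$, non-degeneracy of $\mathcal{A}$ supplies some $x_i - x_j = a_{ij}^{(1)}$, which restricts to the same hyperplane on $H_0$. For a direction $\{q, j\}$ with $j \ne p$, non-degeneracy of $\mathcal{A}$ in direction $(q,j)$ provides $x_q - x_j = a_{qj}^{(1)}$, again restricting in the $\{q,j\}$ direction. The only subtle point is bookkeeping the folding of directions $\{p, j\}$ onto $\{q, j\}$, but since non-degeneracy demands only a single hyperplane per direction, this folding never destroys the property; if anything it produces extra hyperplanes in the surviving directions. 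I do not anticipate a genuine obstacle, only the need to write the case analysis cleanly enough to cover every $(i,j)$ exactly once.
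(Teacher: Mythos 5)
Your proposal is correct and follows essentially the same route as the paper: identify $H_0$ with $\mathbb{R}^{n-1}$ by eliminating one of its two coordinates, run the same case analysis on how $\{i,j\}$ meets $\{p,q\}$ to see that every restricted hyperplane is again of the form $x_i-x_j=\mathrm{const}$, and deduce non-degeneracy of the restriction from non-degeneracy of $\mathcal{A}$ in each surviving direction. The only cosmetic difference is that the paper eliminates the second index of $H_0$ while you eliminate the first.
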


\begin{proof}
    Suppose that $H_0: x_k - x_l = a$ for some $1 \le k < l \le n$ and $a \in \mathbb{R}$. 
    Let $\pi$ be a projection from $H_0$ to $\mathbb{R}^{n-1}$ by 
    \[\pi (x_1, \dots,x_l, \ldots x_n) = (x_1, \ldots,\hat{x}_l, \ldots,x_n).\]
    It is not hard to tell that $\pi$ is an isomorphism. Therefore, the restriction arrangement $\mathcal{A}^{H_0}$ is isomorphic to the arrangement $\pi(\mathcal{A}^{H_0})$ in $\mathbb{R}^{n-1}$ under the projection $\pi$. For any $H \in \mathcal{A}-{H_0}$ such that $H \cap H_0 \ne \emptyset$, $H$ has the form
    \[H: x_i - x_j = b\]
    for some $1 \le i<j \le n$ and $b \in \mathbb{R}$. We obtain all the hyperplanes in arrangement $\pi(\mathcal{A}^{H_0})$ as follows by taking the projection of nonempty intersections with $H_0$.
    \[\pi(H \cap H_0) = \begin{cases}
        \{x \in \mathbb{R}^{n-1}: \ x_k - x_j = a+b \}, &\text{if}\ i = l, \\
        \{x \in \mathbb{R}^{n-1}: \ x_k - x_i = a-b \}, &\text{if}\ j = l, \\
        \{x \in \mathbb{R}^{n-1}: \ x_i - x_j = b\}, &\text{otherwise}.\\
    \end{cases}\]
    Each hyperplane is parallel to some linear hyperplane $x_i -x_j = 0$, which means $\pi(\mathcal{A}^{H_0})$ is a deformation of type $A$ Coxeter arrangement. On the other hand, for each pair $1 \le i < j \le n$ such that $i,j \ne l$, since $\mathcal{A}$ is non-degenerate,  there is at least one hyperplane of the form $x_i - x_j = a_{ij}$ for some $a_{ij} \in \mathbb{R}$ in $\mathcal{A}$ as well as in $\pi(\mathcal{A}^{H_0})$. Therefore, the arrangement $\pi(\mathcal{A}^{H_0})$ is non-degenerate. 
\end{proof}
Now we are ready to prove \Cref{thm:typeA}.
\begin{proof}[Proof of \Cref{thm:typeA}]
    We use induction on the dimension $n$ of the deformation $\mathcal{A}$.
    
    For the base case when $n = 2$, assume the deformation $\mathcal{A} = \{x_1-x_2 = a_1,\ldots,a_k\}$. It is easy to show that $r_1(\mathcal{A}) = k-1$ and $r_2(\mathcal{A}) = 2$. From the intersection poset of $\mathcal{A}$, we have the characteristic polynomial $\chi_{\mathcal{A}}(t) = t^2-kt = 2\binom{t}{2}-(k-1)\binom{t}{1}$.

    We assume that the theorem holds for all the deformation $\mathcal{A}$ in $\mathbb{R}^{n-1}$. For the case of dimension $n$, firstly we show that the Coxeter arrangement $\operatorname{Cox}_{A}(n)$ satisfies the expansion equation in the theorem. 
    Regions in $\operatorname{Cox}_{A}(n)$ have a natural bijection to permutations of length $n$. And each region has the level of $n$. Thus, 
    \[r(\operatorname{Cox}_{A}(n)) = r_n(\operatorname{Cox}_{A}(n)) = n!.\]
    On the other hand, by applying \Cref{thm:terao}, we know that
    \[\chi_{\operatorname{Cox}_{A}(n)}(t) = t\cdot(t-1)\cdots(t-n+1) = n! \binom{t}{n} = r_n(\operatorname{Cox}_{A}(n)) \binom{t}{n}.\]
    Next, we know that any deformation can be constructed step by step from $\operatorname{Cox}_{A}(n)$ (within finite steps), where each step involves either adding a hyperplane to or removing a hyperplane from the arrangement, while ensuring that the arrangement remains a non-degenerate deformation at each step. It is sufficient to show that the deformation preserves the expansion equation after adding or removing a hyperplane, given that the equation holds prior to the operation. Assume that $\mathcal{A}$ is any deformation satisfying the expansion, i.e.
    \[\chi_{\mathcal{A}}(t) = \sum_{k = 0}^{n} (-1)^{n-k} \cdot r_k(\mathcal{A}) \cdot \binom{t}{k}.\]
    Add a hyperplane $H_0$ to the arrangement $\mathcal{A}$ and denote the new deformation by $\tilde{\mathcal{A}}$, where $H_0$ is parallel to some hyperplane in $\mathcal{A}$. Denote the arrangement $\tilde{\mathcal{A}}^{H_0}$ by $\mathring{\mathcal{A}}$ for simplicity. By \Cref{lemma:nonde}, $\mathring{\mathcal{A}}$ is a non-degenerate deformation of type $A$ in $\mathbb{R}^{n-1}$. Applying the inductive hypothesis, the expansion holds for $\mathring{\mathcal{A}}$, i.e. 
    \[\chi_{\mathring{\mathcal{A}}}(t) = \sum_{k = 0}^{n-1} (-1)^{n-1-k} \cdot r_k(\mathring{\mathcal{A}}) \cdot \binom{t}{k}.\]
    $(\tilde{\mathcal{A}},\mathcal{A},\mathring{\mathcal{A}})$ is a triple of arrangements. Then by \Cref{lemma:d-r}, \[\chi_{\tilde{\mathcal{A}}}(t) = \chi_{\mathcal{A}}(t) - \chi_{\mathring{\mathcal{A}}}(t).\]
    Now we count the number of regions with each level that increases due to the addition of hyperplane $H_0$. Let $H \in \mathcal{A}$ be the nearest hyperplane parallel to $H_0$. All the increased regions appear between $H_0$ and $H$, which means they can not have the level of $n$. Meanwhile, the rest of the regions remain at the same level as before. Each newly-constructed region corresponds to a region of the arrangement $\tilde{\mathcal{A}}^{H_0}$ in $H_0$, which is exactly the intersection of the boundary of the region with $H_0$. See \Cref{ex:add} for an example, the regions increased are marked with stars. Since the distance between $H_0$ and $H$ is bounded, the vectors of each increased regions are constrained in the direction of the norm of $H_0$, which implies that each of the increased region has the same level as its corresponding region in $H_0$. Thus, 
    \[r_k(\tilde{\mathcal{A}}) = r_k(\mathcal{A}) + r_k(\mathring{\mathcal{A}})\]
    for each $1 \le k \le n-1$, and $r_n(\tilde{\mathcal{A}}) = r_n(\mathcal{A})$.
    We have
    \[\begin{aligned}
        \chi_{\tilde{\mathcal{A}}}(t)
        =& \chi_{\mathcal{A}}(t) - \chi_{\mathring{\mathcal{A}}}(t)\\
        =& \sum_{k = 0}^{n} (-1)^{n-k} \cdot r_k(\mathcal{A}) \cdot \binom{t}{k} - \sum_{k = 0}^{n-1} (-1)^{n-1-k} \cdot r_k(\mathring{\mathcal{A}}) \cdot \binom{t}{k}\\
        =& r_n(\mathcal{A})\cdot \binom{t}{n} + \sum_{k = 0}^{n-1} (-1)^{n-k} \cdot \left(r_k(\mathcal{A}) + r_k(\mathring{\mathcal{A}})\right) \cdot \binom{t}{k}\\
        =& \sum_{k = 0}^{n} (-1)^{n-k} \cdot r_k(\tilde{\mathcal{A}}) \cdot \binom{t}{k},
    \end{aligned}.\]
    The addition of a parallel hyperplane preserves the expansion equation. We use the same method to show that the removal of a parallel hyperplane preserves the expansion as well. Note that a hyperplane can only be removed if there is at least one other hyperplane parallel to it,in order to maintain its non-degenerate condition. These three arrangements described above can still form a triple of arrangements by swapping the order of the first two arrangements before and after the operation. The rest of the proof remains the same. Finally, we conclude that every step preserves the expansion and therefore all the non-degenerate deformations $\mathcal{A}$ in $\mathbb{R}^{n}$ satisfy the the expansion equation.
\end{proof}
\begin{figure}[h]
    \centering
    \begin{subfigure}{.4\textwidth}
    \centering
    \begin{tikzpicture}[scale = 0.6]
        \draw (-1,0)--(6,0);
        \draw (-1,2)--(6,2);
        \draw (2,-2)--(2,4);
        \draw (-1,4)--(5,-2);
    \end{tikzpicture}
    \caption{The arrangement $\mathcal{A}$.}
    \end{subfigure}
    \begin{subfigure}{.4\textwidth}
    \centering
    \begin{tikzpicture}[scale = 0.6]
        \draw (-1,0)--(6,0);
        \draw (-1,2)--(6,2);
        \draw (2,-2)--(2,4);
        \draw (-1,4)--(5,-2);
        \draw (1,4)--(7,-2);
        \node at (-1,4.5) {$H$};
        \node at (1,4.5) {$H_0$};
        \node at (1,3) {$*$};
        \node at (2.3,2.3) {$*$};
        \node at (3.25,1) {$*$};
        \node at (5,-1) {$*$};
        %\draw[color = red] (1,3.9)--(1.9,3) (2.1,2.8)--(2.8,2.1) (3.1,1.8)--(4.8,0.1) (5.1,-0.2)--(6.9,-2);
    \end{tikzpicture}
    \caption{Adding $H_0$ in $\mathcal{A}$.}
    \end{subfigure}
    \caption{The regions increased after adding a hyperplane.}
    \label{ex:add}
\end{figure}
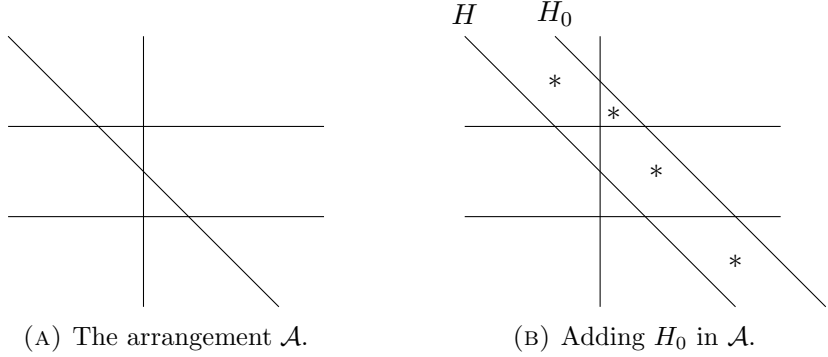

\subsection{Deformations of type $B$ Coxeter arrangements}
As in the previous subsection, we firstly present the analogous lemma for deformations of type $B$.
\begin{lemma}\label{lemma:nondeB}
    Let $\mathcal{B}$ be a non-degenerate deformation of type $B$ Coxeter arrangement in $\mathbb{R}^n$. Choose a hyperplane $H_0 \in \mathcal{B}$. Then the restriction of $\mathcal{B}$ on $H_0$ is a non-degenerate deformation of type $B$ Coxeter arrangement in $\mathbb{R}^{n-1}$. 
\end{lemma}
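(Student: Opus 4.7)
The plan is to mirror the structure of the proof of \Cref{lemma:nonde} but with a three-way split according to the type of the distinguished hyperplane $H_0$. Since $\mathcal{B}$ is a type $B$ deformation, $H_0$ has one of three possible forms:
(i) $H_0 : x_k = a$, (ii) $H_0 : x_k - x_l = a$, or (iii) $H_0 : x_k + x_l = a$, for some indices $k,l$ and real number $a$. In each case I would define an explicit coordinate projection $\pi : H_0 \to \mathbb{R}^{n-1}$ that drops one variable: in case (i) drop $x_k$, and in cases (ii) and (iii) drop $x_l$ (solving the defining equation of $H_0$ to express it in terms of the remaining coordinates). Each such $\pi$ is an affine isomorphism, so $\mathcal{B}^{H_0} \cong \pi(\mathcal{B}^{H_0})$.

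Next I would run through, for each of the three cases for $H_0$, the three possible forms for an arbitrary $H \in \mathcal{B} - \{H_0\}$ with $H \cap H_0 \ne \emptyset$ (namely $x_i = b$, $x_i - x_j = b$, or $x_i + x_j = b$), and compute $\pi(H \cap H_0)$ by substituting the expression for the dropped coordinate. The computation splits into sub-cases according to whether the indices appearing in $H$ coincide with the dropped index; for example, in case (ii), when $H : x_i - x_j = b$ with $j = l$, the substitution $x_l = x_k - a$ yields $\pi(H\cap H_0) : x_k - x_i = a - b$. In every sub-case the output is a hyperplane parallel to one of $x_i = 0$, $x_i - x_j = 0$, or $x_i + x_j = 0$ in the remaining coordinates, so $\pi(\mathcal{B}^{H_0})$ is a deformation of $\operatorname{Cox}_{B}(n-1)$.

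For non-degeneracy I would check that every direction of $\operatorname{Cox}_{B}(n-1)$ is hit by at least one hyperplane of $\pi(\mathcal{B}^{H_0})$. The cleanest way is to notice that for any two indices $i,j$ distinct from the dropped index, the non-degeneracy of $\mathcal{B}$ supplies a hyperplane in $\mathcal{B}$ of each of the forms $x_i = \ast$, $x_i - x_j = \ast$, $x_i + x_j = \ast$, and none of these involve the dropped index, so each such hyperplane meets $H_0$ and projects under $\pi$ to a hyperplane of the same form in $\mathbb{R}^{n-1}$. This immediately gives non-degeneracy of $\pi(\mathcal{B}^{H_0})$.

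The main obstacle is purely bookkeeping: the case analysis has $3 \times 3$ cases with further index sub-cases, and care must be taken so that the substitution is carried out correctly (especially in case (iii), where a hyperplane $x_i + x_j = b$ with $j = l$ becomes $x_i - x_k = b - a$, swapping the $+$ type for a $-$ type). The actual content of each case is a one-line linear substitution, and nothing deeper than the type $A$ argument is required; the non-degeneracy check reduces to the observation above and introduces no new difficulty.
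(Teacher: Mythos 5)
Your proposal is correct and follows essentially the same route as the paper: a three-way case split on the form of $H_0$, an explicit coordinate projection dropping one variable, a sub-case computation of $\pi(H\cap H_0)$ showing each image is parallel to a type $B$ direction, and non-degeneracy obtained from hyperplanes of $\mathcal{B}$ not involving the dropped index. Your observation that in case (iii) a $+$ type hyperplane through the dropped index becomes a $-$ type (and vice versa) is exactly the content the paper leaves implicit in its ``Case 3 is similar'' remark, and your non-degeneracy check is, if anything, slightly more explicit than the paper's.
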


\begin{proof}
    We prove the lemma by examining the equation of $H_0$ case by case, using the same method as in the proof of \Cref{lemma:nonde}. Recall that each hyperplane $H \in \mathcal{A}$ has one of the following forms: $x_i = b$, $x_i - x_j = b$, or $x_i + x_j = b$ for some $1 \le i \ne j \le n$ and some $b \in \mathbb{R}$.
    
    \noindent\textbf{Case 1.} $H_0$ has the form
    \[H_0: x_k = a\]
    for some $1 \le k \le n$ and $a \in \mathbb{R}$. Let $\pi$ be a projection from $H_0$ to $\mathbb{R}^{n-1}$ by 
    \[\pi (x_1, \dots,x_k, \ldots x_n) = (x_1, \ldots,\hat{x}_k, \ldots,x_n).\]
    
    For any $H \in \mathcal{A} - H_0$ such that $H \cap H_0 \ne \emptyset$, there are three possibilities.
    
    If $H: x_i = b \ (i \ne k)$, then 
    \[\pi(H \cap H_0) = \{x \in \mathbb{R}^{n-1} \ | \ x_i = b\}.\]
    
    If $H: x_i -x_j = b$, then
    \[\pi(H \cap H_0) = \begin{cases}
        \{x \in \mathbb{R}^{n-1} \ | \ x_j = a-b\}, &\text{if}\ i = k, \\
        \{x \in \mathbb{R}^{n-1} \ | \ x_i = a+b\}, &\text{if}\ j = k, \\
        \{x \in \mathbb{R}^{n-1} \ | \ x_i-x_j = b\}, &\text{otherwise}.
    \end{cases}\]
    
    If $H: x_i +x_j = b$, then
    \[\pi(H \cap H_0) = \begin{cases}
        \{x \in \mathbb{R}^{n-1} \ | \ x_j = b-a\}, &\text{if}\ i = k, \\
        \{x \in \mathbb{R}^{n-1} \ | \ x_i = b-a\}, &\text{if}\ j = k, \\
        \{x \in \mathbb{R}^{n-1} \ | \ x_i+x_j = b\}, &\text{otherwise}.
    \end{cases}\]
    
    Those equations of hyperplanes $\pi(H \cap H_0)$ show that the arrangement $\mathcal{B}^{H_0} \cong \pi(\mathcal{B}^{H_0})$ is a deformation of type $B$ Coxeter arrangement. On the other hand, since $\mathcal{B}$ is non-degenerate, there is at least one hyperplane in each direction in $\mathcal{B}$. By iterating over all possible subscripts $i$, $j$ or $h$ of $H$, we obtain that the arrangement $\mathcal{B}^{H_0}$ is non-degenerate as well. 

    \
    
    \noindent\textbf{Case 2.} $H_0$ has the form
    \[H_0: x_k - x_l = a\]
    for some $1 \le k < l \le n$ and $a \in \mathbb{R}$. Let $\pi$ be a projection from $H_0$ to $\mathbb{R}^{n-1}$ by 
    \[\pi (x_1, \dots,x_l, \ldots x_n) = (x_1, \ldots,\hat{x}_l, \ldots,x_n).\]
    
    For any $H \in \mathcal{A} - H_0$ such that $H \cap H_0 \ne \emptyset$, there are three possibilities. 
    
    If $H: x_h = b$, then 
    \[\pi(H \cap H_0) = \begin{cases}
        x_k = a+b, &\text{if}\ h = l,\\
        x_h = b, &\text{if}\ h \ne l.
    \end{cases}\]
    
    If $H: x_i -x_j = b$, then $\pi(H \cap H_0)$ is shown as in the proof of \Cref{lemma:nonde}.
    
    If $H: x_j +x_j = b$, then
    \[\pi(H \cap H_0) = \begin{cases}
        x_k + x_j = a+b , &\text{if}\ i = l, \\
        x_k + x_i = a+b , &\text{if}\ j = l, \\
        x_i + x_j = b, &\text{otherwise}.\\
    \end{cases}\]
    
    As the argument above, we can conclude that the arrangement $\mathcal{B}^{H_0} \cong \pi(\mathcal{B}^{H_0})$ is a non-degenerate deformation of type $B$ Coxeter arrangement in $\mathbb{R}^{n-1}$. 

    \
    
    \noindent\textbf{Case 3.} $H_0$ has the form
    \[H_0: x_k + x_l = a\]
    for some $1 \le k < l \le n$ and $a \in \mathbb{R}$. The proof is similar to Case 2, so we will not repeat it here.
\end{proof}

\begin{proof}[Proof of \Cref{thm:typeB}]
    We prove the theorem by induction on dimension $n$ of the deformation $\mathcal{B}$.
    For the base case when $n = 1$, assume the deformation $\mathcal{B} = \{x_1 = a_1,\ldots,a_k\}$. Then $r_0(\mathcal{B}) = k-1$ and $r_1(\mathcal{B}) = 2$. We have the characteristic polynomial
    \[\chi_{\mathcal{B}}(t) = t - kt = 2\binom{\frac{t-1}{2}}{1} - (k-1)\binom{\frac{t-1}{2}}{0},\]
    satisfying the expansion of the theorem.
    
    For the Coxeter arrangement $\operatorname{Cox}_{B}(n)$ of any dimension $n$, there is a natural bijection between regions in $\operatorname{Cox}_{B}(n)$ and signed permutations of length $n$. And each region has the level of $n$. Thus,
    \[r(\operatorname{Cox}_{B}(n)) = r_n(\operatorname{Cox}_{B}(n)) = 2^n \cdot n!.\]
    Moreover, by \Cref{thm:terao}, we have
    \[\chi_{\operatorname{Cox}_{B}(n)}(t) = (t-1) \cdot (t-3) \cdots (t-(2n-1)) = 2^n\cdot n! \cdot \binom{\frac{t-1}{2}}{n} = r_n(\operatorname{Cox}_{B}(n))\binom{\frac{t-1}{2}}{n},\]
    which implies that the expansion equation holds for Coxeter arrangements of type $B$.
    
    The subsequent steps are the same as the arguments in the proof of \Cref{thm:typeA}. Any deformations of dimension $n$ can be constructed step by step from the Coxeter arrangement $\operatorname{Cox}_{B}(n)$, with each step adding a hyperlane to or removing a hyperplane from the arrangement. By \Cref{lemma:nondeB}, the restriction preserves the properties of non-degenerate deformations. Therefore, we can apply the deletion-restriction lemma combined with the inductive hypothesis of lower dimension to conclude the result recursively. 
\end{proof}

\subsection{Applications of main results}
\Cref {thm:typeA} recovers and generalizes recent results on several specific types of arrangements, while \Cref{thm:typeB} further extends \Cref{thm:typeA} to type $B$.

The Catalan-type arrangement is defined by
\[\mathcal{C}_{n,A} = \{x_i -x_j = 0,\pm a_1,\pm a_2,\ldots,\pm a_m\ |\ 1 \le i \ne j \le n\},\]
where $A = \{a_1 > a_2 > \ldots > a_m\}$ is a positive real number set. Note that when $A = \{1\}$ the arrangement is the classical Catalan arrangement and when $A = [m]$ it becomes the $m$-Catalan arrangement.

The semiorder-type arrangement is defined by 
\[\mathcal{C}^*_{n,A} = \{x_i -x_j = \pm a_1,\pm a_2,\ldots,\pm a_m\ |\ 1 \le i \ne j \le n\}\]
where $A = \{a_1 > a_2 > \ldots > a_m\}$ is a positive real number set.

Very recently, Chen et al. \cite{wsj,wsj2} established the following result on the characteristic polynomials of a specific type of arrangements.
\begin{theorem}[\cite{wsj,wsj2}]\label{thm:wsj_chi}
    Let $\bar{\mathcal{A}}_n = \{x_i - x_j = a_1, a_2,\ldots, a_m\ |\ 1 \le i \ne j \le n\}$, where $A = \{a_1,a_2,\ldots,a_m\}$ is a real number set. Then
    \[\chi_{\bar{\mathcal{A}}_n}(t) = \sum_{k = 0}^n (-1)^{n-k}r_{k}(\mathcal{A}_n) \binom{t}{k}.\]
    In particular, the formula holds for both $\mathcal{C}_{n,A}$ and $\mathcal{C}^*_{n,A}$.
\end{theorem}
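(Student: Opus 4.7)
The plan is to recognize \Cref{thm:wsj_chi} as a direct corollary of \Cref{thm:typeA}, with essentially no new work required beyond checking that the arrangements in question satisfy the non-degeneracy hypothesis of \Cref{typeA}. The key observation is that the general arrangement $\bar{\mathcal{A}}_n$ already fits the template of \Cref{typeA}: for every pair $1 \le i \ne j \le n$ the translates appearing in direction $(i,j)$ are exactly $A = \{a_1, \ldots, a_m\}$, so in the notation of \Cref{typeA} we have $t_{ij} = m \ge 1$ for all pairs. Hence $\bar{\mathcal{A}}_n$ is a non-degenerate deformation of $\operatorname{Cox}_A(n)$, and \Cref{thm:typeA} applies verbatim to give the stated expansion.

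For the two special families, I would simply verify non-degeneracy. In $\mathcal{C}_{n,A}$, the translates in direction $(i,j)$ form the set $\{0, \pm a_1, \ldots, \pm a_m\}$, which has cardinality $2m+1 \ge 1$ (with the $a_k$ distinct positive reals). In $\mathcal{C}^*_{n,A}$, the translates are $\{\pm a_1, \ldots, \pm a_m\}$, which has cardinality $2m \ge 2$ whenever $m \ge 1$. In both cases every direction contains at least one hyperplane, so both arrangements are non-degenerate deformations of $\operatorname{Cox}_A(n)$ and a second invocation of \Cref{thm:typeA} finishes the argument.

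The main obstacle is essentially that there is no obstacle: the content of Chen et al.'s theorem is fully subsumed by \Cref{thm:typeA}, and the present proof reduces to checking a definition. The only subtlety worth flagging in the write-up is a notational one, namely that the right-hand side should read $r_k(\bar{\mathcal{A}}_n)$ rather than $r_k(\mathcal{A}_n)$, referring to the number of level-$k$ regions of the arrangement actually being considered. This is precisely the kind of situation emphasized in the introduction, where our general framework recovers specific results via \emph{much simpler arguments} than those in \cite{wsj,wsj2}.
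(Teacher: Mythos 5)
Your proposal is correct and matches the paper's treatment exactly: the paper likewise derives \Cref{thm:wsj_chi} by observing that $\bar{\mathcal{A}}_n$, $\mathcal{C}_{n,A}$, and $\mathcal{C}^*_{n,A}$ are non-degenerate deformations of $\operatorname{Cox}_A(n)$ and then invoking \Cref{thm:typeA}. Your note that the right-hand side should read $r_k(\bar{\mathcal{A}}_n)$ is a fair catch of a typo in the statement.
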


Note that the arrangements mentioned above are non-degenerate deformations of type $A$ Coxeter arrangement. Our main result \Cref{thm:typeA} not only recovers \Cref{thm:wsj_chi}, but also extends the above result to \textbf{any} non-degenerate deformations of the Braid arrangement through much simpler arguments. While the proof of \Cref{thm:wsj_chi} relies on certain symmetries of coefficients, our results \Cref{thm:typeA} and \Cref{thm:typeB} employs a more general method and requires much fewer ristrictions on arrangements.

\

Moreover, our main results \Cref{thm:typeA} and \Cref{thm:typeB} provide a new approach to determine the characteristic polynomial of hyperplane arrangement.

The authors of \cite{wsj} provided the following enumeration result for the regions with fixed levels in $m$-Catalan arrangements. 

\begin{theorem}[\cite{wsj}]
    The number of regions $r_k(\mathcal{C}_{n,[m]})$ with level $k$ in $m$-Catalan arrangement $\mathcal{C}_{n,[m]}$ is given by
    \[r_k(\mathcal{C}_{n,[m]}) = \frac{n!mk}{(m+1)n{-}k}\binom{(m+1)n{-}k}{mn}.\]
\end{theorem}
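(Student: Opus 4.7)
The plan is to combine \Cref{thm:typeA} with the classical factorization of the characteristic polynomial of the $m$-Catalan arrangement, and then read off $r_k$ by comparing coefficients in the binomial basis $\{\binom{t}{k}\}$. Since $\mathcal{C}_{n,[m]}$ is a non-degenerate deformation of $\operatorname{Cox}_{A}(n)$ (every direction $(i,j)$ contains the hyperplane $x_i-x_j=0$), \Cref{thm:typeA} gives
\[\chi_{\mathcal{C}_{n,[m]}}(t) = \sum_{k=0}^n (-1)^{n-k} r_k(\mathcal{C}_{n,[m]}) \binom{t}{k},\]
while the classical result of Postnikov--Stanley and Athanasiadis gives
\[\chi_{\mathcal{C}_{n,[m]}}(t) = t\prod_{i=1}^{n-1}(t-mn-i) = t \cdot (n-1)! \binom{t-mn-1}{n-1}.\]
Since $\{\binom{t}{k}\}_{k\ge 0}$ is a basis of the polynomial ring, comparing the coefficient of $\binom{t}{k}$ on both sides determines $r_k(\mathcal{C}_{n,[m]})$ uniquely.

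To carry out the expansion I would apply the Vandermonde--Chu identity together with the negation rule $\binom{-a}{j} = (-1)^j\binom{a+j-1}{j}$ to obtain
\[\binom{t-mn-1}{n-1} = \sum_{j=0}^{n-1}(-1)^{n-1-j}\binom{(m+1)n-1-j}{n-1-j}\binom{t}{j},\]
then multiply by $t$ and use $t\binom{t}{j} = (j+1)\binom{t}{j+1} + j\binom{t}{j}$ to distribute the factor $t$ across the sum. Collecting the coefficient of $\binom{t}{k}$ and applying the Pascal identity
\[\binom{(m+1)n-k}{n-k} - \binom{(m+1)n-k-1}{n-k-1} = \binom{(m+1)n-k-1}{n-k}\]
telescopes the two contributions into a single binomial term, yielding
\[r_k(\mathcal{C}_{n,[m]}) = (n-1)!\cdot k \cdot \binom{(m+1)n-1-k}{n-k}.\]

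Finally, to match the claimed closed form I would use $\binom{(m+1)n-k}{mn} = \binom{(m+1)n-k}{n-k}$ together with the identity $\binom{N-1}{j} = \frac{N-j}{N}\binom{N}{j}$ applied at $N = (m+1)n-k$ and $j = n-k$, which rewrites the answer as $\frac{n!mk}{(m+1)n-k}\binom{(m+1)n-k}{mn}$. The main obstacle is the bookkeeping at the boundary indices: at $k=0$ the overall factor $k$ forces $r_0=0$, consistent with the fact that $\mathcal{C}_{n,[m]}$ contains the line $x_1=\cdots=x_n$ and hence has no bounded regions; at $k=n$ only the first of the two telescoping contributions survives and must reproduce $r_n = n!$ (the number of Weyl chambers of $\operatorname{Cox}_{A}(n)$). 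Both cases fall out of the same computation but warrant a separate check, and once handled the argument is purely algebraic, bypassing the symmetry-based approach of \cite{wsj}.
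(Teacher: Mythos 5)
Your derivation is correct, and it is worth noting that the paper itself offers no proof of this statement: it is quoted from \cite{wsj}, where it is obtained by a direct combinatorial count of regions by level, and is then used \emph{together with} \Cref{thm:typeA} to produce $\chi_{\mathcal{C}_{n,[m]}}(t)$. You run the logic in the opposite direction: since $\{\binom{t}{k}\}_{k\ge 0}$ is a basis, \Cref{thm:typeA} determines the $r_k$ uniquely from the classical factorization $\chi_{\mathcal{C}_{n,[m]}}(t)=t\prod_{i=1}^{n-1}(t-mn-i)$, which is known independently (Postnikov--Stanley, Athanasiadis) via the finite field method, so there is no circularity. I checked the algebra: the Vandermonde expansion with the negation rule gives $\binom{t-mn-1}{n-1}=\sum_{j=0}^{n-1}(-1)^{n-1-j}\binom{(m+1)n-1-j}{n-1-j}\binom{t}{j}$; the identity $t\binom{t}{j}=(j+1)\binom{t}{j+1}+j\binom{t}{j}$ produces exactly the two contributions you describe; Pascal telescopes them to $(n-1)!\,k\,\binom{(m+1)n-1-k}{n-k}$; and the rewriting via $\binom{N-1}{j}=\frac{N-j}{N}\binom{N}{j}$ at $N=(m+1)n-k$, $j=n-k$ recovers $\frac{n!mk}{(m+1)n-k}\binom{(m+1)n-k}{mn}$. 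The boundary cases need even less care than you allow: at $k=n$ the telescoped expression still applies because $\binom{mn-1}{-1}=0$, and at $k=0$ the factor $k$ kills the term, consistent with every region being a cylinder in the direction $(1,\dots,1)$ and hence of level at least $1$. What your route buys is a purely algebraic two-line derivation of the region counts from known data; what it costs is that it cannot serve the purpose the paper puts this theorem to (deriving $\chi$ from the $r_k$), and it gives no combinatorial insight into which regions have which level, which is the content of the proof in \cite{wsj}.
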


Combined with \Cref{thm:typeA},we immediately obtain the characteristic polynomial for the $m$-Catalan arrangement.

Many hyperplane arrangements of interest to combinatorialists are non-degenerate deformations of Coxeter arrangements. By counting the number of regions with fixed level for more such arrangements, \Cref{thm:typeA} and \Cref{thm:typeB} can be applied to derive their characteristic polynomials in a novel way. 

\section*{Acknowledgements}
The author thanks their advisor Prof. Yibo Gao for proposing this research direction and for his invaluable guidance and feedback.

\bibliographystyle{plain}
\bibliography{ref.bib}
\end{document}